\theoremstyle{remark}
\newtheorem{thm}{Theorem}
\theoremstyle{definition}
\newtheorem{ex}[thm]{Example}
\providecommand{\abs}[1]{\lvert#1\rvert}
\newcommand\independent{\protect\mathpalette{\protect\independenT}{\perp}}
\def\independenT#1#2{\mathrel{\rlap{$#1#2$}\mkern2mu{#1#2}}}
\begin{document}

\begin{frontmatter}

\title{New perspectives on knockoffs construction}
\runtitle{Knockoffs}

\begin{aug}
\author[A]{\fnms{Patrizia} \snm{Berti}\ead[label=e1]{patrizia.berti@unimore.it}}
\author[B]{\fnms{Emanuela} \snm{Dreassi}\ead[label=e2]{emanuela.dreassi@unifi.it}}
\author[C]{\fnms{Fabrizio} \snm{Leisen}\ead[label=e3]{fabrizio.leisen@gmail.com}}
\author[D]{\fnms{Luca} \snm{Pratelli}\ead[label=e4]{pratel@mail.dm.unipi.it}}
\and
\author[E]{\fnms{Pietro} \snm{Rigo}\ead[label=e5]{pietro.rigo@unibo.it}}
\address[A]{Dipartimento di Matematica Pura ed Applicata ``G. Vitali'', Universit\`a di Modena e Reggio-Emilia, via Campi 213/B, 41100 Modena, Italy}

\address[B]{Dipartimento di Statistica, Informatica, Applicazioni, Universit\`a di Firenze, viale Morgagni 59, 50134 Firenze, Italy}

\address[C]{School of Mathematical Sciences, University of Nottingham, University Park, Nottingham, NG7 2RD, UK}

\address[D]{Accademia Navale, viale Italia 72, 57100 Livorno,
Italy}

\address[E]{Dipartimento di Scienze Statistiche ``P. Fortunati'', Universit\`a di Bologna, via delle Belle Arti 41, 40126 Bologna, Italy}
\end{aug}

\begin{abstract}
Let $\Lambda$ be the collection of all probability distributions for $(X,\widetilde{X})$, where $X$ is a fixed random vector and $\widetilde{X}$ ranges over all possible knockoff copies of $X$ (in the sense of \cite{CFJL18}). Three topics are developed in this paper: (i) A new characterization of $\Lambda$ is proved; (ii) A certain subclass of $\Lambda$, defined in terms of copulas, is introduced; (iii) The (meaningful) special case where the components of $X$ are conditionally independent is treated in depth. In real problems, after observing $X=x$, each of points (i)-(ii)-(iii) may be useful to generate a value $\widetilde{x}$ for $\widetilde{X}$ conditionally on $X=x$.
\end{abstract}

\begin{keyword}[class=MSC2010]
\kwd[Primary ]{62E10}
\kwd{62H05}
\kwd[; secondary ]{60E05}
\kwd{62J02}
\end{keyword}

\begin{keyword}
\kwd{Conditional independence}
\kwd{Copulas}
\kwd{High-dimensional Regression}
\kwd{Knockoffs}
\kwd{Multivariate Dependence}
\kwd{Variable Selection}
\end{keyword}

\end{frontmatter}

\section{Introduction}\label{intro}

The availability of massive data along with new scientific problems have reshaped statistical thinking and data analysis. High-dimensionality has significantly challenged the boundaries of traditional statistical theory, in particular in the regression framework. Variable selection methods are fundamental to discover meaningful relationships between an outcome and all the measured covariates.

A new approach to regression problems, hereafter referred to as the {\em knockoff procedure} (KP), has been recently introduced by Barber and Candes; see \cite{BC15}, \cite{BCS20}, \cite{BCJW21}, \cite{CFJL18}, \cite{SSC19}. KP aims to control the false discovery rate among all the variables included in the model. Indeed, KP is relevant for at least two reasons. Firstly, there are not many variable selection methods able to control the false discovery rate with finite-sample guarantees, mainly when the number $p$ of covariates far exceeds the sample size $n$. Secondly, KP makes assumptions that are substantially different from those commonly encountered in a regression set up.

Let $X_i$ and $Y$ be real random variables, where $i=1,\ldots,p$ for some integer $p\ge 2$. Here, the $X_i$ should be regarded as covariates and $Y$ as the response variable. Letting
\begin{gather*}
X=(X_1,\ldots,X_p),
\end{gather*}
one of the main features of KP is to model the probability distribution of $X$ rather than the conditional distribution of $Y$ given $X$. Quoting from \cite[p. 554]{CFJL18}:

\vspace{0.2cm}

{\em The usual set-up for inference in conditional models is to assume a strong parametric model for
the response conditional on the covariates, such as a homoscedastic linear model, but to assume
as little as possible about, or even to condition on, the covariates. We do the exact opposite
by assuming that we know everything about the covariate distribution but nothing about the
conditional distribution $Y|X_1,\ldots,X_p$. Hence, we merely shift the burden of knowledge. Our
philosophy is, therefore, to model $X$, not $Y$, whereas, classically, $Y$ (given $X$) is modelled and
$X$ is not.}

\vspace{0.2cm}

Real situations where to model $X$ is more appropriate than to model $Y|X$ are actually common. An effective example, in a genetic framework, is in \cite[Sect. 1]{BCS20}.

As highlighted, the main target of KP is variable selection, taking the false discovery rate under control. We refer to \cite{BC15}, \cite{BCS20}, \cite{BCJW21}, \cite{CFJL18}, \cite{SSC19} for a description of KP and a discussion of its statistical behavior. In this paper, we deal with knockoff random variables, possibly the basic ingredient of KP.

\subsection{Two related problems} From now on, the probability distribution of any random element $U$ is denoted by $\mathcal{L}(U)$ and the coordinates of a point $x\in\mathbb{R}^n$ are indicated by $x_1,\ldots,x_n$. Moreover, we let
\begin{gather*}
I=\bigl\{1,\ldots,p\bigr\}.
\end{gather*}

For $x\in\mathbb{R}^{2p}$ and $S\subset I$, define $f_S(x)\in\mathbb{R}^{2p}$ by swapping $x_i$ with $x_{p+i}$ for each $i\in S$ and leaving all other coordinates fixed. Then, $f_S:\mathbb{R}^{2p}\rightarrow\mathbb{R}^{2p}$ is a permutation. For instance, for $p=2$, one obtains $f_S(x)=(x_3,x_2,x_1,x_4)$ if $S=\{1\}$, $f_S(x)=(x_1,x_4,x_3,x_2)$ if $S=\{2\}$ and $f_S(x)=(x_3,x_4,x_1,x_2)$ if $S=\{1,2\}$. Let
\begin{gather*}
\mathcal{F}=\bigl\{f_S:\,S\subset I\bigr\}
\end{gather*}
where $f_\emptyset$ is the identity map on $\mathbb{R}^{2p}$.

A {\em knockoff copy of} $X$, or merely a {\em knockoff}, is a $p$-variate random variable $\widetilde{X}=(\widetilde{X}_1,\ldots,\widetilde{X}_p)$ such that

\vspace{0.2cm}

\begin{itemize}

\item $f(X,\widetilde{X})\sim (X,\widetilde{X})$ for every $f\in\mathcal{F}$;

\vspace{0.2cm}

\item $\widetilde{X}\independent Y\mid X$.

\end{itemize}

\vspace{0.2cm}

The condition $\widetilde{X}\independent Y\mid X$ is automatically true if $\widetilde{X}=g(X)$ for some measurable function $g$. More generally, such a condition is guaranteed whenever $\widetilde{X}$ is constructed ``without looking" at $Y$. This is exactly the case of this paper. Hence, the condition $\widetilde{X}\independent Y\mid X$ is neglected.

We also note that a knockoff trivially exists. It suffices to let $\widetilde{X}=X$. This trivial knockoff, however, is not useful in practice. Roughly speaking, for KP to work nicely, $\widetilde{X}$ should be ``as independent of $X$ as possible".

Let $\Lambda$ denote the collection of all knockoff distributions, namely
\begin{gather*}
\Lambda=\bigl\{\mathcal{L}(X,\widetilde{X}):\widetilde{X}\text{ a knockoff copy of }X\bigr\}.
\end{gather*}

For KP to apply, a knockoff copy $\widetilde{X}$ of $X$ is required. Accordingly, the following two problems arise:

\vspace{0.2cm}

\begin{itemize}

\item[(i)] How to build a knockoff $\widetilde{X}$ ?

\vspace{0.2cm}

\item[(ii)] Is it possible to characterize $\Lambda$ ?

\end{itemize}

\vspace{0.2cm}

Questions (i) and (ii) are connected. A characterization of $\Lambda$, if effective, should suggest how to obtain $\widetilde{X}$. Anyhow, both (i) and (ii) have been answered.

As to (i), a first construction of $\widetilde{X}$ is Algorithm 1 of \cite[p. 563]{CFJL18}. Even if nice, however, this construction is not computationally efficient except from some special cases. See \cite[Sect. 2.2]{BCJW21}, \cite[Sect. 7.2.1 ]{CFJL18}, \cite[p. 6]{SSC19}.

As to (ii), a characterization of $\Lambda$ is in \cite[Theo. 1]{BCJW21}. Such a characterization, based on conditional distributions, is effective. In fact, exploiting it and the Metropolis algorithm, some further (efficient) constructions of $\widetilde{X}$ have been singled out.

\subsection{Our contribution}

This paper is about problems (i)-(ii). Three distinct issues are discussed.

\vspace{0.2cm}

\begin{itemize}

\item In Section \ref{k9g5}, a new characterization of $\Lambda$ is proved. Such a characterization is based on invariance arguments and provides a simple description of $\Lambda$. However, the characterization seems to have a theoretical content only. Apart from a few special cases, in fact, it does not help to build a knockoff in practice.

\vspace{0.2cm}

\item In Section \ref{c5g8n}, a certain (proper) subclass $\Lambda_0\subset\Lambda$ is introduced. The elements of $\Lambda_0$ admit a simple and explicit representation in terms of copulas. In particular, to work with $\Lambda_0$ is straightforward when $\mathcal{L}(X)$ corresponds to an Archimedean copula. Furthermore, if $\mathcal{L}(X,\widetilde{X})\in\Lambda_0$, the conditional distribution $\mathcal{L}(\widetilde{X}\mid X)$ can be written in closed form. Therefore, after observing $X=x$, a value $\widetilde{x}$ for $\widetilde{X}$ can be drawn from $\mathcal{L}(\widetilde{X}\mid X=x)$ directly. This is quite different from the usual methods for obtaining $\widetilde{x}$; see e.g. \cite{BC15}, \cite{BCS20}, \cite{BCJW21}, \cite{CFJL18}, \cite{SSC19}.

\vspace{0.2cm}

\item In Section \ref{2wx8m}, we focus on the case where $X_1,\ldots,X_p$ are conditionally independent, in the sense that
\begin{gather}\label{n8y6dd}
P\bigl(X_1\in A_1,\ldots,X_p\in A_p\bigr)=E\Bigl\{\prod_{i=1}^pP(X_i\in A_i\mid Z)\Bigr\}
\end{gather}
for some random element $Z$ and all Borel sets $A_1,\ldots,A_p\subset\mathbb{R}$. This section includes our main results. Indeed, under \eqref{n8y6dd}, to build a reasonable knockoff $\widetilde{X}$ is straightforward. In addition, with a suitable choice of $Z$, to realize condition \eqref{n8y6dd} is quite simple in practice. It suffices to regard $Z$ as a random parameter, equipped with a prior distribution, and to implement a sort of Bayesian procedure. For instance, to obtain a knockoff $\widetilde{X}$ such that cov$(X_i,\widetilde{X}_i)=0$ for each $i\in I$ is very easy; see Examples \ref{m2mx}-\ref{s6st8} for details. From the statistician's point of view, the advantage is twofold. Firstly, condition \eqref{n8y6dd} is easy to be realized and able to describe a number of real situations. Secondly, if $X$ is modeled by \eqref{n8y6dd}, to build $\widetilde{X}$ is straightforward. In particular, as in Section \ref{c5g8n}, the conditional distribution $\mathcal{L}(\widetilde{X}\mid X)$ can be usually written in closed form.
\end{itemize}

\vspace{0.2cm}

To close the paper, in Section \ref{e5ttf67n}, the results mentioned above are translated into practical algorithms. This section, written with applications in mind, aims to show how such results can be exploited in real problems.

\subsection{Further notation}\label{g7n9k}
 In the sequel,
\begin{gather*}
\widetilde{X}=(\widetilde{X}_1,\ldots,\widetilde{X}_p)
\end{gather*}
is {\em any} $p$-variate random variable (defined on the same probability space as $X$). Moreover, $\mathcal{B}_n$ is the Borel $\sigma$-field on $\mathbb{R}^n$ and $m_n$ the Lebesgue measure on $\mathcal{B}_n$.

\medskip

As in \cite[Sect. 4]{BPCID}, we denote by $\mathcal{S}(a,b)$ the {\em symmetric $\alpha$-stable law with parameters $a$ and} $b$, where $a\in\mathbb{R}$, $b>0$ and $\alpha\in (0,2]$. This means that $\mathcal{S}(a,b)$ is the probability distribution of $a+b^{1/\alpha}L$ where $L$ is a real random variable with characteristic function
\begin{gather*}
	E\bigl\{\exp(i\,t\,L)\bigr\}=\exp\Bigl(-\frac{\abs{t}^\alpha}{2}\Bigr)\quad\quad\text{for all }t\in\mathbb{R}.
\end{gather*}
Note that $\mathcal{S}(a,b)=\mathcal{N}(a,b)$ if $\alpha=2$ and $\mathcal{S}(a,b)=\mathcal{C}(a,b)$ if $\alpha=1$, where $\mathcal{C}(a,b)$ is the Cauchy distribution with density $f(x)=\frac{2\,b}{\pi}\,\frac{1}{b^2+4\,(x-a)^2}$ (the standard Cauchy distribution corresponds to $a=0$ and $b=2$).

\medskip

Finally, for any measures $\mu$ and $\nu$ (defined on the same $\sigma$-field) we write $\mu\ll\nu$ to mean that $\mu$ is absolutely continuous with respect to $\nu$, that is, $\mu(A)=0$ whenever $A$ is measurable and $\nu(A)=0$.

\section{A characterization of $\Lambda$}\label{k9g5}

Let $\mathcal{P}$ be the collection of $\mathcal{F}$-invariant probabilities, namely, those probability measures $\lambda$ on $\mathcal{B}_{2p}$ satisfying
\begin{gather*}
\lambda\circ f^{-1}=\lambda\quad\quad\text{for all }f\in\mathcal{F}.
\end{gather*}

We begin by noting that $\lambda\in\mathcal{P}$ if and only if
\begin{gather}\label{b7u}
\lambda=\frac{\sum_{f\in\mathcal{F}}\,\pi\circ f^{-1}}{2^p}
\end{gather}
for {\em some} probability measure $\pi$ on $\mathcal{B}_{2p}$. In fact, if $\lambda\in\mathcal{P}$, condition \eqref{b7u} trivially holds with $\pi=\lambda$ (since card$\,(\mathcal{F})=2^p$). Conversely, if $g\in\mathcal{F}$ and $\lambda$ meets \eqref{b7u} for some $\pi$, then
\begin{gather*}
\lambda\circ g^{-1}=\frac{\sum_{f\in\mathcal{F}}\,\pi\circ f^{-1}\circ g^{-1}}{2^p}=\frac{\sum_{f\in\mathcal{F}}\,\pi\circ (g\circ f)^{-1}}{2^p}=\lambda
\end{gather*}
where the last equality is because $\mathcal{F}$ is a group under composition.

The above characterization of $\mathcal{P}$ has the following consequence.

\begin{thm}\label{n7x}
$\lambda\in\Lambda$ if and only if condition \eqref{b7u} holds for some probability measure $\pi$ on $\mathcal{B}_{2p}$ such that
\begin{gather}\label{uhb}
\frac{1}{2^p}\,\sum_{f\in\mathcal{F}}\pi\big\{x\in\mathbb{R}^{2p}:f(x)\in A\times\mathbb{R}^p\bigr\}=P\bigl(X\in A)
\end{gather}
for each $A\in\mathcal{B}_p$.
\end{thm}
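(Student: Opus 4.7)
The plan is to reduce the theorem to two facts: first, a knockoff distribution is exactly an $\mathcal{F}$-invariant probability on $\mathcal{B}_{2p}$ whose first $p$-dimensional marginal equals $\mathcal{L}(X)$; second, condition \eqref{uhb} is precisely the statement that the first marginal of $\lambda$ (as written via \eqref{b7u}) equals $\mathcal{L}(X)$.

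First I would observe that, ignoring the conditional-independence clause as the paper instructs, $\widetilde{X}$ is a knockoff iff $\mathcal{L}(X,\widetilde{X})\in\mathcal{P}$. Note that marginal equality $\widetilde{X}\sim X$ is not an extra requirement: taking $f=f_I$ gives $(\widetilde{X},X)\sim(X,\widetilde{X})$, forcing $\mathcal{L}(\widetilde{X})=\mathcal{L}(X)$. Hence
\begin{gather*}
\Lambda=\bigl\{\lambda\in\mathcal{P}:\lambda(A\times\mathbb{R}^p)=P(X\in A)\text{ for all }A\in\mathcal{B}_p\bigr\}.
\end{gather*}
The nontrivial direction here is that for any $\lambda\in\mathcal{P}$ with correct first marginal, one actually obtains a knockoff on the same space as $X$. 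This is a standard coupling/extension argument: since the conditional law $\lambda(\cdot\mid \text{first }p\text{ coords}=x)$ is a regular conditional distribution and the first marginal of $\lambda$ agrees with $\mathcal{L}(X)$, one enlarges the probability space (if necessary) by an independent uniform and defines $\widetilde{X}$ as a measurable function of $X$ and that uniform so that $\mathcal{L}(X,\widetilde{X})=\lambda$.

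Next I would use the characterization \eqref{b7u} of $\mathcal{P}$ already established in the excerpt. Given $\lambda$ of the form \eqref{b7u}, its first marginal is
\begin{gather*}
\lambda(A\times\mathbb{R}^p)=\frac{1}{2^p}\sum_{f\in\mathcal{F}}(\pi\circ f^{-1})(A\times\mathbb{R}^p)=\frac{1}{2^p}\sum_{f\in\mathcal{F}}\pi\bigl\{x\in\mathbb{R}^{2p}:f(x)\in A\times\mathbb{R}^p\bigr\},
\end{gather*}
so the equality $\lambda(A\times\mathbb{R}^p)=P(X\in A)$ for all $A\in\mathcal{B}_p$ is literally \eqref{uhb}.

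Combining: $(\Rightarrow)$ If $\lambda\in\Lambda$, then $\lambda\in\mathcal{P}$, so \eqref{b7u} holds with $\pi=\lambda$, and the first marginal equality gives \eqref{uhb}. $(\Leftarrow)$ Conversely, any $\pi$ satisfying \eqref{b7u} yields $\lambda\in\mathcal{P}$, and \eqref{uhb} translates into the first-marginal equality, so by the coupling argument above, $\lambda\in\Lambda$.

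The only step that needs some care is the coupling in the converse direction; apart from that, the proof is just bookkeeping with the characterization of $\mathcal{P}$ and the definition of a marginal.
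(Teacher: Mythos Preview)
Your proposal is correct and follows essentially the same route as the paper: the forward direction takes $\pi=\lambda$, and the converse computes the first marginal of $\lambda$ from \eqref{b7u}, matches it to $\mathcal{L}(X)$ via \eqref{uhb}, and then invokes an enlargement/coupling to realize $\widetilde{X}$ with $\mathcal{L}(X,\widetilde{X})=\lambda$. Your write-up is simply more explicit about the coupling step (regular conditional distribution plus an auxiliary uniform), whereas the paper compresses this into the phrase ``up to enlarging the probability space''.
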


\begin{proof}
If $\lambda\in\Lambda$, conditions \eqref{b7u}-\eqref{uhb} trivially hold with $\pi=\lambda$. Conversely, under \eqref{b7u}-\eqref{uhb}, one obtains
\begin{gather*}
\lambda(A\times\mathbb{R}^p)=\frac{\sum_{f\in\mathcal{F}}\,\pi\circ f^{-1}(A\times\mathbb{R}^p)}{2^p}=P\bigl(X\in A)\quad\quad\text{for all }A\in\mathcal{B}_p.
\end{gather*}
Hence, up to enlarging the probability space where $X$ is defined, there exists a $p$-variate random variable $\widetilde{X}$ such that $\mathcal{L}(X,\widetilde{X})=\lambda$. Since $\lambda\in\mathcal{P}$ (because of condition \eqref{b7u}) $\widetilde{X}$ is a knockoff copy of $X$, namely, $\lambda\in\Lambda$.
\end{proof}

From the theoretical point view, Theorem \ref{n7x} provides a simple and clear description of $\Lambda$. Unfortunately, however, to select a probability $\pi$ satisfying condition \eqref{uhb} is very hard. Thus, in most cases, Theorem \ref{n7x} is not practically useful. Nevertheless, it may give some indications.

\begin{ex} {\bf ($X$ has a density with respect to a product measure).}\label{z2w9k}
Let
\begin{gather*}
\nu=\nu_1\times\ldots\times\nu_p
\end{gather*}
be a product measure on $\mathcal{B}_p$, where each $\nu_i$ is a $\sigma$-finite measure on $\mathcal{B}_1$. For instance, $\nu_i=m_1$ for all $i\in I$. Or else, $\nu_i=m_1$ for some $i$ and $\nu_j=\,$counting measure (on a countable subset of $\mathbb{R}$) for some $j$. And so on. In this example, we assume $\mathcal{L}(X)\ll\nu$. Hence, $X$ has a density $h$ with respect to $\nu$, namely
\begin{gather*}
P(X\in A)=\int_A h\,d\nu\quad\quad\text{for all }A\in\mathcal{B}_p.
\end{gather*}

Fix a probability measure $\pi$ on $\mathcal{B}_{2p}$ satisfying condition \eqref{uhb} and define $\lambda$ through condition \eqref{b7u}. Then, Theorem \ref{n7x} implies $\lambda\in\Lambda$. In addition, since the measure $\nu\times\nu$ is $\mathcal{F}$-invariant, one obtains $\lambda\ll\nu\times\nu$ provided $\pi\ll\nu\times\nu$. Precisely, if $\pi\ll\nu\times\nu$ and $g$ is a density of $\pi$ with respect to $\nu\times\nu$, then
\begin{gather*}
q=\frac{\sum_{f\in\mathcal{F}}\,g\circ f}{2^p}
\end{gather*}
is a density of $\lambda$ with respect to $\nu\times\nu$. This formula is practically useful. In fact, since $\lambda\in\Lambda$, there is a knockoff $\widetilde{X}$ such that $\mathcal{L}(X,\widetilde{X})=\lambda$. Hence, after observing $X=x$, a value $\widetilde{x}$ for such $\widetilde{X}$ can be drawn from the conditional density
\begin{gather*}
\frac{q(x,\widetilde{x})}{h(x)}=\frac{\sum_{f\in\mathcal{F}}g\bigl[f(x,\widetilde{x})\bigr]}{2^ph(x)}\quad\quad\text{where }x,\,\widetilde{x}\in\mathbb{R}^p.
\end{gather*}

Obviously, to make this example concrete, one needs a probability measure $\pi$ satisfying condition \eqref{uhb} and $\pi\ll\nu\times\nu$. As noted above, to find $\pi$ is usually hard. However, a probability $\pi$ with the required properties is in Example \ref{e6n8h}.
\end{ex}

\medskip

We close this section by determining those $\pi$ which satisfy equation \eqref{b7u} for a given $\lambda\in\mathcal{P}$.

\begin{thm}\label{r5y7bq}
Fix $\lambda\in\mathcal{P}$ and any probability measure $\pi$ on $\mathcal{B}_{2p}$. The following statements are equivalent:

\vspace{0.2cm}

\begin{itemize}

\item[(a)] Condition \eqref{b7u} holds, namely, $\lambda=\frac{\sum_{f\in\mathcal{F}}\,\pi\circ f^{-1}}{2^p}$;

\vspace{0.2cm}

\item[(b)] $\pi$ admits a density with respect to $\lambda$, say $q$, and
\begin{gather*}
\sum_{f\in\mathcal{F}}\,q\bigl[f(x)\bigr]=2^p\quad\quad\text{for }\lambda\text{-almost all }x\in\mathbb{R}^{2p};
\end{gather*}

\vspace{0.2cm}

\item[(c)] $\pi=\lambda$ on $\mathcal{G}$, where $\mathcal{G}=\bigl\{A\in\mathcal{B}_{2p}:f^{-1}(A)=A\text{ for all }f\in\mathcal{F}\bigr\}$.

\end{itemize}
\end{thm}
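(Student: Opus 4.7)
I would establish the cycle (a)$\Leftrightarrow$(b) and (a)$\Leftrightarrow$(c), with the key observation that every $f\in\mathcal{F}$ is an involution, so $f^{-1}=f$, and that $\lambda\circ f^{-1}=\lambda$ by hypothesis.

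\textbf{(a)$\Leftrightarrow$(b).} Assume (a). Since $\pi\le \sum_{f}\pi\circ f^{-1}=2^p\lambda$, one term of a nonnegative sum is dominated by the sum, so $\pi\ll \lambda$; let $q=d\pi/d\lambda$. I would then compute, via the change-of-variables formula and the $\mathcal{F}$-invariance of $\lambda$, that for every $f\in\mathcal{F}$ and every bounded Borel $h$,
\begin{gather*}
\int h\,d(\pi\circ f^{-1}) = \int (h\circ f)\,q\,d\lambda = \int h\,(q\circ f)\,d\lambda,
\end{gather*}
so that $\pi\circ f^{-1}$ admits density $q\circ f$ with respect to $\lambda$. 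Summing and dividing by $2^p$ and using (a) gives a density of $\lambda$ with respect to itself, forcing $\frac{1}{2^p}\sum_{f}q\circ f = 1$ $\lambda$-a.e. Conversely, assuming (b), the same identity for densities runs in reverse and produces (a).

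\textbf{(a)$\Rightarrow$(c).} If $A\in\mathcal{G}$, then $f^{-1}(A)=A$ for every $f\in\mathcal{F}$, so each summand in \eqref{b7u} gives $\pi\circ f^{-1}(A)=\pi(A)$, whence $\lambda(A)=\pi(A)$.

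\textbf{(c)$\Rightarrow$(a).} This is the step I expect to require the most care. Set $\mu=\frac{1}{2^p}\sum_{f}\pi\circ f^{-1}$; the group argument in \eqref{b7u} shows that $\mu\in\mathcal{P}$. I would first check that $\mu=\lambda$ on $\mathcal{G}$: for $A\in\mathcal{G}$ one has $\mu(A)=\pi(A)=\lambda(A)$ by (c). The task is then to upgrade agreement on $\mathcal{G}$ to agreement on all of $\mathcal{B}_{2p}$, using that both measures are $\mathcal{F}$-invariant. The key remark is that for any $A\in\mathcal{B}_{2p}$, the averaged indicator
\begin{gather*}
\phi_A(x)=\frac{1}{2^p}\sum_{f\in\mathcal{F}}1_A\bigl(f(x)\bigr)
\end{gather*}
is $\mathcal{G}$-measurable, because $\phi_A\circ g=\phi_A$ for every $g\in\mathcal{F}$ (the map $f\mapsto f\circ g$ permutes $\mathcal{F}$). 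Now $\mathcal{F}$-invariance of $\mu$ and $\lambda$ yields $\mu(A)=\int\phi_A\,d\mu$ and $\lambda(A)=\int\phi_A\,d\lambda$, and since $\phi_A$ is $\mathcal{G}$-measurable while $\mu$ and $\lambda$ agree on $\mathcal{G}$, the two integrals coincide. Hence $\mu=\lambda$, which is (a).

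\textbf{Main obstacle.} The only nontrivial direction is (c)$\Rightarrow$(a); the rest is bookkeeping. The crucial trick is the $\mathcal{G}$-measurability of $\phi_A$, which reduces the problem to integrating a $\mathcal{G}$-measurable function against two measures that agree on $\mathcal{G}$. Everything else reduces to the fact that $\mathcal{F}$ is a finite involutive group acting on $\mathbb{R}^{2p}$ and leaving $\lambda$ invariant.
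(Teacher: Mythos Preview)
Your proof is correct and follows essentially the same route as the paper. The paper organizes the implications as the cycle (a)$\Rightarrow$(b)$\Rightarrow$(c)$\Rightarrow$(a), whereas you prove (a)$\Leftrightarrow$(b) and (a)$\Leftrightarrow$(c); for the key step (c)$\Rightarrow$(a) the paper's averaged Dirac measure $\mu_x(A)=2^{-p}\sum_f\delta_{f(x)}(A)$ is exactly your $\phi_A(x)$, and the paper integrates it directly against $\pi$ rather than against the symmetrized measure $\mu$, but the underlying idea---$\mathcal{G}$-measurability of the orbit average---is identical.
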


\vspace{0.2cm}

The proof of Theorem \ref{r5y7bq} is postponed to the final Appendix.

As an application of Theorem \ref{r5y7bq}, in the next example, $\lambda$ is a well known knockoff distribution and we look for a probability $\pi$ satisfying equation \eqref{b7u} with respect to $\lambda$.

\begin{ex}
Suppose $X\sim\mathcal{N}(0,\Sigma)$ and take a diagonal matrix $D$ such that
\begin{gather*}
G=\left(
    \begin{array}{cc}
      \Sigma & \Sigma-D \\
      \Sigma-D & \Sigma \\
    \end{array}
  \right)
\end{gather*}
is semidefinite positive. If $(X,\widetilde{X})\sim\mathcal{N}(0,G)$, then $\widetilde{X}$ is a knockoff copy of $X$; see e.g. \cite[p. 559]{CFJL18}. Fix $G$ as above and define $\lambda=\mathcal{N}(0,G)$. Define also
\begin{gather*}
q(x)=2^p\,\frac{\phi(x)}{\sum_{g\in\mathcal{F}}\phi[g(x)]}\quad\quad\text{for all }x\in\mathbb{R}^{2p},
\end{gather*}
where $\phi$ is any strictly positive Borel function on $\mathbb{R}^{2p}$. Since $\mathcal{F}$ is a group,
\begin{gather*}
2^{-p}\,\sum_{f\in\mathcal{F}}\,q\bigl[f(x)\bigr]=\sum_{f\in\mathcal{F}}\,\,\frac{\phi[f(x)]}{\sum_{g\in\mathcal{F}}\phi[g\circ f(x)]}=\frac{\sum_{f\in\mathcal{F}}\phi[f(x)]}{\sum_{g\in\mathcal{F}}\phi[g(x)]}=1.
\end{gather*}
Since card$\,(\mathcal{F})=2^p$ and $\lambda\in\mathcal{P}$,
\begin{gather*}
\int q(x)\,\lambda(dx)=\sum_{f\in\mathcal{F}}\,\int\,\frac{\phi(x)}{\sum_{g\in\mathcal{F}}\phi[g(x)]}\,\lambda(dx)
\\=\sum_{f\in\mathcal{F}}\,\int\,\frac{\phi[f(x)]}{\sum_{g\in\mathcal{F}}\phi[g(x)]}\,\lambda(dx)
=\int\,\frac{\sum_{f\in\mathcal{F}}\phi[f(x)]}{\sum_{g\in\mathcal{F}}\phi[g(x)]}\,\lambda(dx)=1.
\end{gather*}
Therefore, thanks to Theorem \ref{r5y7bq},
\begin{gather*}
\pi(dx)=q(x)\,\lambda(dx)
\end{gather*}
is a probability measure on $\mathcal{B}_{2p}$ satisfying equation \eqref{b7u}.
\end{ex}

\section{Constructing knockoffs via copulas}\label{c5g8n}

In this section, $F$ and $F_i$ are the distribution functions of $X$ and $X_i$, respectively. Moreover, for any distribution function $G$ on $\mathbb{R}^n$, we write $\lambda_G$ to denote the probability measure on $\mathcal{B}_n$ induced by $G$.

A $n$-{\em copula}, or merely a copula, is a distribution function on $\mathbb{R}^n$ with uniform (on the interval $(0,1)$) univariate marginals. By Sklar's theorem, for any distribution function $G$ on $\mathbb{R}^n$ there is a $n$-copula $C$ such that
\begin{gather*}
G(x)=C\bigl[G_1(x_1),\ldots,G_n(x_n)\bigr]\quad\quad\text{for all }x\in\mathbb{R}^n,
\end{gather*}
where $G_1,\ldots,G_n$ are the univariate marginals of $G$.

Let us {\em fix} a $p$-copula $C$ such that
\begin{gather*}
F(x)=C\bigl[F_1(x_1),\ldots,F_p(x_p)\bigr]\quad\quad\text{for all }x\in\mathbb{R}^p.
\end{gather*}
Note that $C$ is unique whenever $F_1,\ldots,F_p$ are continuous. Note also that, since $F$ is known, $C$ can be regarded to be known as well.

In order to manufacture a knockoff, a naive idea is to let
\begin{gather}\label{cmv}
H(x)=C\Bigl[D_1\bigl(F_1(x_1),F_1(x_{p+1})\bigr),\ldots,D_p\bigl(F_p(x_p),F_p(x_{2p})\bigr)\Bigr]
\end{gather}
for all $x\in\mathbb{R}^{2p}$, where $D_1,\ldots,D_p$ are any 2-copulas. Such an $H$ is a possible candidate to be the distribution function of $(X,\widetilde{X})$ for some knockoff copy $\widetilde{X}$ of $X$.

Unfortunately, $H$ may fail to be a distribution function on $\mathbb{R}^{2p}$. However $\lambda_H\in\Lambda$, more or less by definition, whenever $H$ is a distribution function and $D_1,\ldots,D_p$ are symmetric (i.e., $D_i(u_2,u_1)=D_i(u_1,u_2)$ for all $u\in[0,1]^2$ and $i\in I$).

\begin{thm}\label{c4b7}
Suppose that $H$ is a distribution function on $\mathbb{R}^{2p}$. Then,
\begin{gather}\label{de4q1l}
\lambda_H\big\{x\in\mathbb{R}^{2p}:f(x)\in A\times\mathbb{R}^p\bigr\}=P\bigl(X\in A)
\end{gather}
for all $f\in\mathcal{F}$ and $A\in\mathcal{B}_p$. In particular, $\lambda_H$ satisfies condition \eqref{uhb} (namely, \eqref{uhb} holds if $\pi=\lambda_H$). Moreover, $\lambda_H\in\Lambda$ whenever $D_1,\ldots,D_p$ are symmetric.
\end{thm}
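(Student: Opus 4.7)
The plan is to handle the three assertions in order, working directly from the definition of $H$ and exploiting two simple properties: that any $2$-copula $D$ satisfies $D(u,1)=D(1,u)=u$, and that $F_i(+\infty)=1$.

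\textbf{Proof of \eqref{de4q1l}.} For a given $f=f_S$ with $S\subset I$, the event $\{x\in\mathbb{R}^{2p}: f_S(x)\in A\times\mathbb{R}^p\}$ is simply $\{(Y_1^S,\ldots,Y_p^S)\in A\}$, where $Y_i^S=x_{p+i}$ if $i\in S$ and $Y_i^S=x_i$ otherwise. I would compute the distribution function of $(Y_1^S,\ldots,Y_p^S)$ under $\lambda_H$ by evaluating $H$ at the point that forces the retained coordinates to be $\le x_i$ and sends the remaining $p$ coordinates to $+\infty$. Substituting these arguments into \eqref{cmv}, each $D_i$-factor becomes either $D_i(F_i(x_i),1)$ (when $i\notin S$) or $D_i(1,F_i(x_i))$ (when $i\in S$); in both cases the uniform-marginal property of a $2$-copula collapses this to $F_i(x_i)$. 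Hence the joint distribution function of $(Y_1^S,\ldots,Y_p^S)$ equals $C[F_1(x_1),\ldots,F_p(x_p)]=F(x_1,\ldots,x_p)$, so $\mathcal{L}(Y_1^S,\ldots,Y_p^S)=\mathcal{L}(X)$, which is exactly \eqref{de4q1l}.

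\textbf{Condition \eqref{uhb} with $\pi=\lambda_H$.} This is immediate from \eqref{de4q1l}: every one of the $2^p=\mathrm{card}(\mathcal{F})$ summands in $\sum_{f\in\mathcal{F}}\lambda_H\{f(x)\in A\times\mathbb{R}^p\}$ equals $P(X\in A)$, and dividing by $2^p$ gives the required identity.

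\textbf{$\mathcal{F}$-invariance when each $D_i$ is symmetric.} To conclude $\lambda_H\in\Lambda$ via Theorem \ref{n7x} applied with $\pi=\lambda_H$, it remains to check that $\lambda_H\in\mathcal{P}$ (so that condition \eqref{b7u} holds trivially with $\pi=\lambda_H$). I would verify this at the level of distribution functions: for any $y\in\mathbb{R}^{2p}$ and any $S\subset I$, $f_S$ only swaps $y_i$ with $y_{p+i}$ for $i\in S$, so $H(f_S(y))$ differs from $H(y)$ only in that the two arguments of the factor $D_i(F_i(y_i),F_i(y_{p+i}))$ are interchanged for each $i\in S$. Symmetry of each $D_i$ (with $i\in S$) makes these factors unchanged, hence $H\circ f_S=H$ on $\mathbb{R}^{2p}$. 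Since $f_S$ is a permutation (so $f_S^{-1}=f_S$), this yields $\lambda_H\circ f_S^{-1}=\lambda_H$ as measures, and therefore $\lambda_H\in\mathcal{P}$. Combining this with \eqref{uhb} and invoking Theorem \ref{n7x} gives $\lambda_H\in\Lambda$.

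The only subtle point is the first step: reading off a marginal of $H$ as a limit at $+\infty$ and then seeing that the copula structure collapses correctly via $D_i(u,1)=D_i(1,u)=u$. Once that identity is exploited, everything else — condition \eqref{uhb} and the $\mathcal{F}$-invariance under symmetry — follows by bookkeeping on the arguments of the $D_i$'s inside $C$.
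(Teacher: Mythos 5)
Your proof is correct and follows essentially the same route as the paper's: marginalizing $H\circ f$ by letting the knockoff coordinates tend to $+\infty$ so that the copula identities $D_i(u,1)=D_i(1,u)=u$ collapse the expression to $F$, then observing $H\circ f=H$ when the $D_i$ are symmetric, which gives $\lambda_H\in\mathcal{P}$ and lets Theorem \ref{n7x} (with $\pi=\lambda_H$) yield $\lambda_H\in\Lambda$. You merely spell out the bookkeeping that the paper's two-line argument leaves implicit.
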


\begin{proof}
To prove condition \eqref{de4q1l}, just note that
\begin{gather*}
\lim_{x_{p+1},\ldots,x_{2p}\rightarrow\infty} H\bigl[f(x)\bigr]=F(x_1,\ldots,x_p)\quad\quad\text{for all }f\in\mathcal{F}\text{ and }x\in\mathbb{R}^{2p}.
\end{gather*}
Moreover, if $D_1,\ldots,D_p$ are symmetric, then
\begin{gather*}
H\bigl[f(x)\bigr]=H(x)\quad\quad\text{for all }f\in\mathcal{F}\text{ and }x\in\mathbb{R}^{2p}.
\end{gather*}
Hence, $\lambda_H\in\mathcal{P}$ and Theorem \ref{n7x} implies $\lambda_H\in\Lambda$.
\end{proof}

For Theorem \ref{c4b7} to work, the obvious drawback is how to choose $D_1,\ldots,D_p$ in such a way that $H$ is a distribution function. However, when this drawback can be overcome, an explicit expression for $\mathcal{L}(X,\widetilde{X})$ is available where $\widetilde{X}$ is a knockoff copy of $X$. Hence, the conditional distribution of $\widetilde{X}$ given $X$ can be written in closed form.

\medskip

As an example, suppose that $H$ is a distribution function and $C,D_1,\ldots,D_p,F_1,\ldots,F_p$ are all absolutely continuous with respect to the Lebesgue measure of appropriate dimension. Then, $H$ is absolutely continuous with respect to the Lebesgue measure of dimension $2p$. Moreover, if $\widetilde{X}$ is such that $\mathcal{L}(X,\widetilde{X})=\lambda_H$, the conditional density of $\widetilde{X}$ given $X=x$ can be written as
\begin{gather*}
p(\widetilde{x}\mid x)=\frac{1}{\varphi\bigl[F_1(x_1),\ldots,F_p(x_p)\bigr]\,\prod_{i=1}^pf_i(x_i)}\,\cdot\,\frac{\partial^{2p} H}{\partial x_p\ldots\partial x_1\partial \widetilde{x}_p\ldots\partial \widetilde{x}_1}(x,\widetilde{x})
\end{gather*}
where $x,\,\widetilde{x}\in\mathbb{R}^p$ and $\varphi$ and $f_i$ are the densities of $C$ and $F_i$, respectively. This formula will be used in Section \ref{e5ttf67n}.

\medskip

We next discuss the choice of $D_1,\ldots,D_p$. As already noted, not every choice is admissible.

\begin{ex}\label{f7jn9} {\bf ($H$ may fail to be a distribution function).}
Let $p=2$ and $C(u)=\bigl(u_1+u_2-1)^+$ for $u\in [0,1]^2$. Then, with $D_1=C$, one obtains
\begin{gather*}
\lim_{x_4\rightarrow\infty}H(x)=C\Bigl[D_1\bigl(F_1(x_1),F_1(x_3)\bigr),\,D_2\bigl(F_2(x_2),1\bigr)\Bigr]
\\=C\Bigl[D_1\bigl(F_1(x_1),F_1(x_3)\bigr),\,F_2(x_2)\Bigr]
\\=\Bigl(F_2(x_2)+\bigl(F_1(x_1)+F_1(x_3)-1\bigr)^+-1\Bigr)^+\\=\bigl(F_1(x_1)+F_1(x_3)+F_2(x_2)-2\bigr)^+.
\end{gather*}
Therefore, $\lim_{x_4\rightarrow\infty}H(x)$ is not a distribution function on $\mathbb{R}^3$, so that $H$ is not a distribution function on $\mathbb{R}^4$.
\end{ex}

Let
\begin{gather*}
\Lambda_0=\bigl\{\lambda\in\Lambda:\text{ the distribution function of }\lambda\text{ admits representation \eqref{cmv}}\bigr\}.
\end{gather*}
Despite Example \ref{f7jn9}, a possible question is whether $\Lambda_0=\Lambda$.

\begin{ex} {\bf ($\Lambda_0$ is a proper subset of $\Lambda$).}
Let $U=(U_1,\ldots,U_{2p})$ and $V=(V_1,\ldots,V_{2p})$ be any random variables. Then, $\mathcal{L}(U)=\mathcal{L}(V)$ provided:
\begin{gather*}
\mathcal{L}(U)\in\Lambda_0,\,\mathcal{L}(V)\in\Lambda_0,\quad\text{and}\quad\mathcal{L}(U_i,U_{p+i})=\mathcal{L}(V_i,V_{p+i})\text{ for each }i\in I.
\end{gather*}
After noting this fact, take $U$ and $V$ exchangeable and such that
\begin{gather*}
\mathcal{L}(U)\ne\mathcal{L}(V)\quad\text{but}\quad\mathcal{L}(U_1,\dots,U_p)=\mathcal{L}(V_1,\dots,V_p).
\end{gather*}
Suppose also that $X\sim (U_1,\dots,U_p)$. Since $U$ is exchangeable, $\mathcal{L}(U)\in\mathcal{P}$. By Theorem \ref{n7x} and $X\sim (U_1,\dots,U_p)$, one obtains $\mathcal{L}(U)\in\Lambda$. Similarly, $\mathcal{L}(V)\in\Lambda$. Hence, at least one between $\mathcal{L}(U)$ and $\mathcal{L}(V)$ belongs to $\Lambda\setminus\Lambda_0$. In fact, $\mathcal{L}(U)\ne\mathcal{L}(V)$ but $\mathcal{L}(U_i,U_j)=\mathcal{L}(V_i,V_j)$ for all $i\ne j$ (because of exchangeability).
\end{ex}

We next give conditions for $H$ to be a distribution function.

\begin{thm}\label{zd4g8}
$H$ is a distribution function on $\mathbb{R}^{2p}$ whenever

\vspace{0.2cm}

\begin{itemize}

\item[(j)] $D_i$ is of class $C^2$ for each $i\in I$;

\vspace{0.2cm}

\item[(jj)] $C$ has a density $\varphi$ with respect to $m_p$;

\vspace{0.2cm}

\item[(jjj)] $\varphi$ is of class $C^p$ and, at each point $u\in[0,1]^{2p}$, one obtains
\begin{gather*}
\frac{\partial^p}{\partial u_{2p}\ldots\partial u_{p+1}}\,\,\varphi\Bigl[D_1(u_1,u_{p+1}),\ldots,D_p(u_p,u_{2p})\Bigr]\,\,\prod_{i=1}^p\frac{\partial}{\partial u_i}D_i(u_i,u_{p+i})\ge 0.
\end{gather*}
\end{itemize}

\noindent Under such conditions, one also obtains $\lambda_H\ll m_{2p}$ whenever $\mathcal{L}(X_i)\ll m_1$ for each $i\in I$.

\end{thm}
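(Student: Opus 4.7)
The plan is to verify that $H$ is a distribution function on $\mathbb{R}^{2p}$ by exhibiting an explicit non-negative Lebesgue density for it, via a copula-level computation. Let
\begin{gather*}
K(u)=C\bigl[D_1(u_1,u_{p+1}),\ldots,D_p(u_p,u_{2p})\bigr]\qquad(u\in[0,1]^{2p}),
\end{gather*}
so that $H(x)=K\bigl(F_1(x_1),\ldots,F_p(x_p),F_1(x_{p+1}),\ldots,F_p(x_{2p})\bigr)$. Since each $F_i$ is a univariate distribution function, $H=K\circ F$ componentwise will be a distribution function on $\mathbb{R}^{2p}$ as soon as $K$ is shown to be one on $[0,1]^{2p}$. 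The copula boundary identities give $K(u)=0$ whenever some $u_j=0$, and $K(1,\ldots,1)=C(1,\ldots,1)=1$.

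The core step is the computation of the mixed partial $k(u):=\partial^{2p}K/\partial u_1\cdots\partial u_{2p}$. Using (j) and (jj), and exploiting that each $u_j$ with $j\le p$ enters only the first slot of $D_j$, an iterated chain rule gives
\begin{gather*}
\frac{\partial^p K(u)}{\partial u_1\cdots\partial u_p}=\varphi\bigl[D_1(u_1,u_{p+1}),\ldots,D_p(u_p,u_{2p})\bigr]\prod_{i=1}^p\frac{\partial}{\partial u_i}D_i(u_i,u_{p+i}).
\end{gather*}
The $C^p$-regularity of $\varphi$ and the $C^2$-regularity of the $D_i$ then permit a second iterated differentiation in $u_{p+1},\ldots,u_{2p}$, yielding
\begin{gather*}
k(u)=\frac{\partial^p}{\partial u_{2p}\cdots\partial u_{p+1}}\Bigl\{\varphi\bigl[D_1(u_1,u_{p+1}),\ldots,D_p(u_p,u_{2p})\bigr]\prod_{i=1}^p\frac{\partial}{\partial u_i}D_i(u_i,u_{p+i})\Bigr\},
\end{gather*}
which is exactly the expression declared non-negative by (jjj).

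Since $k$ is continuous and non-negative on $[0,1]^{2p}$ and $K$ vanishes whenever some $u_j=0$, iterated applications of the fundamental theorem of calculus give $K(u)=\int_{[0,u_1]\times\cdots\times[0,u_{2p}]}k(v)\,dv$, so the identity $K(1,\ldots,1)=1$ shows that $k$ is a probability density on $[0,1]^{2p}$ whose distribution function is $K$. For the absolute-continuity claim, if $\mathcal{L}(X_i)\ll m_1$ for each $i$, then $F_i$ has a density $f_i$ and a final chain rule yields
\begin{gather*}
\frac{\partial^{2p}H(x)}{\partial x_1\cdots\partial x_{2p}}=k\bigl(F_1(x_1),\ldots,F_p(x_{2p})\bigr)\,\prod_{i=1}^p f_i(x_i)\,f_i(x_{p+i})\ge 0,
\end{gather*}
so $\lambda_H\ll m_{2p}$ with this as its density.

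I expect the main technical care to concern (i) verifying that all iterated chain-rule manipulations and the resulting FTC reconstruction of $K$ from $k$ are rigorous under just the stated $C^2$- and $C^p$-smoothness, and (ii) handling the boundary-vanishing of $K$ cleanly so that the integral representation is unambiguous. The clean structure of the computation hinges on the fact that each of $u_j$ and $u_{p+j}$ appears in a single $D_j$, which is precisely what aligns the smoothness hypotheses (j)--(jjj) with the orders of differentiation actually required.
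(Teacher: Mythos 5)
Your proposal is correct and follows essentially the same route as the paper: you define the composed function $K$ (the paper's $C^*$), compute its $2p$-th mixed partial via the same two-stage chain rule, use (jjj) for non-negativity, and conclude that $K$ is a ($2p$-dimensional) copula whose composition with the $F_i$ gives the distribution function $H$, with $\lambda_H\ll m_{2p}$ when each $\mathcal{L}(X_i)\ll m_1$. The only difference is cosmetic: the paper invokes a known fact that a function with copula boundary behavior and non-negative $2p$-th mixed partial is a copula absolutely continuous with respect to $m_{2p}$, whereas you reprove that fact inline by the iterated fundamental theorem of calculus.
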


\medskip

Condition (jjj) is a technical constraint, required to guarantee the existence and positivity of the partial derivatives of $H$, and has no heuristic interpretation (known to us). We also recall that $\mathcal{L}(X_i)\ll m_1$ means that the probability distribution of $X_i$ is absolutely continuous with respect to the Lebesgue measure $m_1$.

\medskip

The proof of Theorem \ref{zd4g8} is deferred to the Appendix. Here, we give three final examples.

\medskip

\begin{ex} {\bf (Asymmetric copulas).}\label{e6n8h}
Suppose that $H$ is a distribution function on $\mathbb{R}^{2p}$. If $D_i$ is not symmetric for some $i\in I$, as we assume, then usually $\lambda_H\notin\Lambda$. However, Theorem \ref{c4b7} implies that $\lambda_H$ satisfies condition \eqref{uhb}. Therefore, Theorem \ref{n7x} yields
\begin{gather*}
\lambda:=\frac{\sum_{f\in\mathcal{F}}\,\lambda_H\circ f^{-1}}{2^p}\in\Lambda.
\end{gather*}
Furthermore, the distribution function of $\lambda$, say $G$, can be written explicitly as
\begin{gather*}
G(x)=\frac{\sum_{f\in\mathcal{F}}\,H\bigl[f(x)\bigr]}{2^p}\quad\quad\text{for all }x\in\mathbb{R}^{2p}.
\end{gather*}
Suppose now that $C,D_1,\ldots,D_p$ satisfy conditions (j)-(jj)-(jjj) and $\mathcal{L}(X_i)\ll m_1$ for each $i\in I$. Then, not only $H$ is a distribution function, but $\lambda_H\ll m_{2p}$. Hence, one can let $\pi=\lambda_H$ in Example \ref{z2w9k}.
\end{ex}

\medskip

\begin{ex} {\bf (An open problem).}\label{z4f7hq}
In principle, a knockoff $\widetilde{X}$ should be ``as independent of $X$ as possible". Thus, it is tempting to let
\begin{gather*}
D_i(u)=u_1\,u_2\quad\quad\text{for all }u\in [0,1]^2\text{ and }i\in I.
\end{gather*}
In this case, $D_1,\ldots,D_p$ are symmetric and, for all $i\in I$ and $x\in\mathbb{R}^{2p}$,
\begin{gather*}
\lim_{x_j\rightarrow\infty,\,j\in J_i}H(x)=F_i(x_i)\,F_i(x_{p+i})\quad\quad\text{where }J_i=\{1,\ldots,2p\}\setminus\{i,p+i\}.
\end{gather*}
Therefore, {\em if} $H$ is a distribution function and $(X,\widetilde{X})\sim\lambda_H$, then
\begin{gather*}
\widetilde{X}\text{ is a knockoff copy of }X\text{ and }\widetilde{X}_i\text{ is independent of }X_i\text{ for each }i\in I.
\end{gather*}
Thus, a (natural) question is: {\em If each $D_i$ is the independence copula, under what conditions $H$ is a distribution function} ? Some partial answers are available. For instance, $H$ is a distribution function if $C$ admits a smooth density $\varphi$ (with respect to $m_p$) such that
\begin{gather*}
\frac{\partial^p}{\partial u_{2p}\ldots\partial u_{p+1}}\,\,\varphi\Bigl(u_1u_{p+1},\ldots,u_pu_{2p}\Bigr)\,\,\prod_{i=1}^pu_{p+i}\ge 0.
\end{gather*}
Or else, $H$ is a distribution function if $C$ is Archimedean with a suitable generator $\psi$ (just let $\psi_i(x)=\exp(-x)$ in condition \eqref{z3ef6y} of Example \ref{ub5s0k}). To our knowledge, however, a general answer to the above question is still unknown.
\end{ex}

\medskip

\begin{ex} {\bf (Archimedean copulas).}\label{ub5s0k}
An {\em Archimedean generator} is a continuous and strictly decreasing function $\psi:[0,\infty)\rightarrow (0,1]$ such that $\psi(0)=1$ and $\lim_{x\rightarrow\infty}\psi(x)=0$. By convention, we let $\psi(\infty)=0$ and $\psi^{-1}(0)=\infty$.

Suppose $C$ is Archimedean with generator $\psi$, that is,
$$C(u)=\psi\Bigl(\,\sum_{i=1}^p\psi^{-1}(u_i)\Bigr)\quad\quad\text{for all }u\in [0,1]^p.$$
Suppose also that $\psi$ has derivatives up to order $2p$ on $(0,\infty)$ and
\begin{gather*}
(-1)^k\,\psi^{(k)}\ge 0\quad\quad\text{for }k=1,\ldots,2p,
\end{gather*}
where $\psi^{(k)}$ denotes the $k$-th derivative of $\psi$. In view of \cite[Cor. 2.1]{MN}, the latter condition implies that
$$C^*(u)=\psi\Bigl(\,\sum_{i=1}^{2p}\psi^{-1}(u_i)\Bigr),\quad\quad u\in [0,1]^{2p},$$
is a $2p$-copula. Therefore, $H$ is a distribution function on $\mathbb{R}^{2p}$ as far as $D_1,\ldots,D_p$ are Archimedean with the same generator as $C$. In this case, in fact,
\begin{gather*}
H(x)=\psi\Bigl\{\,\sum_{i=1}^p\psi^{-1}\bigl(D_i\bigl(F_i(x_i),F_i(x_{p+i})\bigr)\bigr)\Bigr\}
\\=\psi\Bigl\{\,\sum_{i=1}^p\psi^{-1}(F_i(x_i))+\sum_{i=1}^p\psi^{-1}(F_i(x_{p+i}))\Bigr\}
\\=C^*\Bigl\{F_1(x_1),\ldots,F_p(x_p),F_1(x_{p+1}),\ldots,F_p(x_{2p})\Bigr\}\quad\quad\text{for all }x\in\mathbb{R}^{2p}.
\end{gather*}
In addition, since $D_1,\ldots,D_p$ are symmetric, one also obtains $\lambda_H\in\Lambda$.

More generally, suppose that $D_i$ is Archimedean with generator $\psi_i$ for each $i\in I$. Then, $H$ is a distribution function and $\lambda_H\in\Lambda$ provided
\begin{gather}\label{z3ef6y}
(-1)^k\,\psi_i^{(k)}\ge 0\quad\text{and}\quad (-1)^{k-1}\,\bigl(\psi^{-1}\circ\psi_i\bigr)^{(k)}\ge 0
\end{gather}
for all $i\in I$ and $k=1,\ldots,2p$; see \cite[p. 190]{OOS} and \cite[p. 297]{ST}. If all the generators $\psi,\psi_1,\ldots,\psi_p$ belong to the same parametric family, such us the Gumbel or the Clayton, condition \eqref{z3ef6y} reduces to a simple restriction on the parameters; see \cite{ELM}.
\end{ex}

\vspace{0.2cm}

A last general remark is that the idea underlying Theorems \ref{c4b7} and \ref{zd4g8} could be realized, possibly in a better way, involving special types of copulas. For instance, a possibility could be using pair copulas; see e.g. \cite{ACFB09}.

\section{Conditional independence}\label{2wx8m}

To build a (reasonable) knockoff is not hard if $X$ is conditionally independent given some random element $Z$. We begin by making this claim precise.

\vspace{0.2cm}

\begin{thm}\label{m1m}
Suppose that, for some random element $Z$, one obtains
\begin{gather}\label{d7m7s}
P\bigl(X_1\in A_1,\ldots,X_p\in A_p\bigr)=E\Bigl\{\prod_{i=1}^pP(X_i\in A_i\mid Z)\Bigr\}
\end{gather}
for all $A_1,\ldots,A_p\in\mathcal{B}_1$. Let $\lambda$ be the (only) probability measure on $\mathcal{B}_{2p}$ such that
\begin{gather*}
\lambda(A_1\times\ldots\times A_{2p})=E\left\{\prod_{i=1}^pP(X_i\in A_i\mid Z)\,\prod_{i=1}^pP(X_i\in A_{p+i}\mid Z)\right\}
\end{gather*}
whenever $A_i\in\mathcal{B}_1$ for all $i=1,\ldots,2p$. Then, $\lambda\in\Lambda$.
\end{thm}

\begin{proof}
For all $A_1,\ldots,A_{2p}\in\mathcal{B}_1$, define
\begin{gather*}
\lambda_0(A_1\times\ldots\times A_{2p})=E\left\{\prod_{i=1}^pP(X_i\in A_i\mid Z)\,\prod_{i=1}^pP(X_i\in A_{p+i}\mid Z)\right\}.
\end{gather*}
Such a $\lambda_0$, defined on
\begin{gather*}
\mathcal{R}=\bigl\{A_1\times\ldots\times A_{2p}:A_i\in\mathcal{B}_1,\,i=1,\ldots,2p\bigr\},
\end{gather*}
uniquely extends to a probability measure $\lambda$ on $\mathcal{B}_{2p}$. By definition,
\begin{gather*}
\lambda\circ f^{-1}(A)=\lambda_0\circ f^{-1}(A)=\lambda_0(A)=\lambda(A)
\end{gather*}
whenever $f\in\mathcal{F}$ and $A\in\mathcal{R}$. Hence, $\lambda\in\mathcal{P}$. Finally, if $A_i=\mathbb{R}$ for $i>p$, condition \eqref{d7m7s} yields
\begin{gather*}
\lambda(A_1\times\ldots\times A_p\times\mathbb{R}^p)=E\Bigl\{\prod_{i=1}^pP(X_i\in A_i\mid Z)\Bigr\}=P\bigl(X_1\in A_1,\ldots,X_p\in A_p\bigr).
\end{gather*}
Therefore, $\lambda\in\Lambda$.
\end{proof}

In real problems, to take advantage of Theorem \ref{m1m}, one needs to select a random element $Z$ satisfying condition \eqref{d7m7s}. As an extreme example, suppose $Z=X$. Then, condition \eqref{d7m7s} holds and $P(X_i\in A_i\mid X)=1_{A_i}(X_i)$ a.s. Therefore,
\begin{gather*}
\lambda(A_1\times\ldots\times A_{2p})=E\left\{\prod_{i=1}^p1_{A_i}(X_i)\,\prod_{i=1}^p1_{A_{p+i}}(X_i)\right\}
\\=P\bigl(X_1\in A_1\cap A_{p+1},\ldots,X_p\in A_p\cap A_{2p}\bigr).
\end{gather*}
Such a $\lambda$ is precisely the probability distribution of the trivial knockoff $(X,X)$ (namely, $\widetilde{X}=X$). Thus, as it could be guessed, $Z=X$ is not a good choice. We now consider some better choices.

\begin{ex}\textbf{(Stable laws).}\label{j9i}
Let $U=(U_1,\ldots,U_p)$ and $Z=(Z_1,\ldots,Z_p)$ be $p$-variate random variables, with $U$ independent of $Z$ and $U_1,\ldots,U_p$ independent among them. Then, condition \eqref{d7m7s} holds whenever
\begin{gather*}
X=U+Z.
\end{gather*}
As an example, fix $\alpha\in (0,2]$ and suppose $U_i\sim\mathcal{S}(a_i,b_i)$ for all $i$. According to Subsection \ref{g7n9k}, this means that $U_i$ has a symmetric $\alpha$-stable distribution with parameters $a_i\in\mathbb{R}$ and $b_i>0$. For $A\in\mathcal{B}_1$, write $\mathcal{S}(a,b)(A)$ to denote the value attached to $A$ by the probability measure $\mathcal{S}(a,b)$. In this notation, since $U_i+c\sim\mathcal{S}(a_i+c,\,b_i)$ for all $c\in\mathbb{R}$, one obtains
\begin{gather*}
P(X_1\in A_1,\ldots,X_p\in A_p\mid Z)=\prod_{i=1}^p\mathcal{S}(a_i+Z_i,b_i)(A_i)\quad\quad{a.s.}
\end{gather*}
Hence, Theorem \ref{m1m} implies $\lambda\in\Lambda$ where
\begin{gather*}
\lambda(A_1\times\ldots\times A_{2p})=E\left\{\prod_{i=1}^p\mathcal{S}(a_i+Z_i,b_i)(A_i)\,\prod_{i=1}^p\mathcal{S}(a_i+Z_i,b_i)(A_{p+i})\right\}.
\end{gather*}
\end{ex}

\medskip

\begin{ex}\textbf{(Normal distributions).}\label{s4b8i1a}
As a special case of Example \ref{j9i} (with $\alpha=2$) suppose $X\sim\mathcal{N}(\mu,\Sigma)$. Let $D$ be a diagonal matrix such that $\Sigma-D$ is semidefinite positive and $d_{ii}\ge 0$ for all $i$, where $d_{ii}$ is the $i$-th diagonal element of $D$. Then, one can take $U\sim\mathcal{N}(0,D)$ and $Z\sim\mathcal{N}(\mu,\Sigma-D)$. The conditional distribution of $X$ given $Z$ is $\mathcal{N}(Z,D)$. Since $D$ is diagonal, $X_1,\ldots,X_p$ are conditionally independent, given $Z$, with $X_i\sim\mathcal{N}(Z_i,d_{ii})$. Define
\begin{gather*}
\lambda(A_1\times\ldots\times A_{2p})=E\left\{\prod_{i=1}^p\mathcal{N}(Z_i,d_{ii})(A_i)\,\prod_{i=1}^p\mathcal{N}(Z_i,d_{ii})(A_{p+i})\right\}.
\end{gather*}
Then, by Theorem \ref{m1m}, there is a knockoff copy $\widetilde{X}$ of $X$ such that $(X,\widetilde{X})\sim\lambda$. Finally, it is easily seen that
\begin{gather*}
\lambda=\mathcal{N}(\mu^*,G)\quad\text{where}\quad\mu^*=\left(
                           \begin{array}{c}
                             \mu \\
                             \mu \\
                           \end{array}
                         \right)\text{ and }G=\left(
    \begin{array}{cc}
      \Sigma & \Sigma-D \\
      \Sigma-D & \Sigma \\
    \end{array}
  \right).
\end{gather*}

A concrete example (suggested by an anonymous referee) is the so called ``equicorrelated" Gaussian distribution, namely, $\sigma_{ii}=b$ and $\sigma_{ij}=a$ for all $i$ and all $j\ne i$, where $0<a<b$ are fixed constants. In this case, it suffices to take $d_{ii}\in (0,b-a)$ for all $i$.
\end{ex}

\vspace{0.2cm}

The probability $\lambda$ obtained in Example \ref{s4b8i1a} is already known to be an element of $\Lambda$; see e.g. \cite[p. 559]{CFJL18}. Instead, in the next example, Theorem \ref{m1m} yields a new knockoff distribution.

\vspace{0.2cm}

\begin{ex}\textbf{(Mixtures of normal distributions).}
Let
\begin{gather*}
X=ZU
\end{gather*}
where $Z$ is a random $p\times p$ diagonal matrix and $U$ a $p$-dimensional column vector. Suppose $U\sim\mathcal{N}(0,I)$ and $Z$ independent of $U$. Then, the probability distribution of $X$ can be written as
\begin{gather*}
P(X\in A)=E\Bigl\{\mathcal{N}(0,ZZ)(A)\Bigr\}\quad\quad\text{for all }A\in\mathcal{B}_p.
\end{gather*}
Probability distributions of this type play a role in various frameworks. For instance, they arise as the limit laws in the CLT for exchangeable random variables; see e.g. \cite[Sect. 3]{BPR04}. In any case, since $ZZ$ is diagonal, $X_1,\ldots,X_p$ are conditionally independent given $Z$ with $X_i\sim\mathcal{N}(0,Z_{ii}^2)$. Hence, Theorem \ref{m1m} implies $\lambda\in\Lambda$ where
\begin{gather*}
\lambda(A_1\times\ldots\times A_{2p})=E\left\{\prod_{i=1}^p\mathcal{N}(0,Z_{ii}^2)(A_i)\,\prod_{i=1}^p\mathcal{N}(0,Z_{ii}^2)(A_{p+i})\right\}.
\end{gather*}
\end{ex}

\medskip

A further example, where conditional independence is exploited to obtain a knockoff, is in \cite{BSSC20}.

\medskip

\vspace{0.2cm}

In applications, to assign $\mathcal{L}(X)$ is one of the main statistician's tasks. Hence, a reasonable strategy is to model $X$ so as to realize conditional independence, with respect to some latent variable $Z$, and then to obtain a knockoff $\widetilde{X}$ via Theorem \ref{m1m}. As already noted, the advantage is twofold. On one hand, conditional independence is easy to be realized and able to describe various real situations. On the other hand, to build $\widetilde{X}$ is straightforward whenever $X$ is conditionally independent. In the rest of this section, the statistician is assumed to adopt this strategy. Thus, he/she decides to model $X$ as conditionally independent with respect to some $Z$. Note that, in this framework, $\mathcal{L}(X)$ {\em is regarded as a statistician's choice (and not as an external constraint to be satisfied)}. The next example is fundamental.

\vspace{0.2cm}

\begin{ex}\label{m2mx} {\bf (Parametric constructions of knockoffs).}
Suppose $X$ is modeled as
\begin{gather*}
P\bigl(X_1\in A_1,\ldots,X_p\in A_p\bigr)=\int_\Theta\,\prod_{i=1}^pQ_i(A_i,\theta)\,\gamma(d\theta),
\end{gather*}
where $Q_1(\cdot,\theta),\ldots,Q_p(\cdot,\theta)$ are probabilities on $\mathcal{B}_1$, indexed by some parameter $\theta\in\Theta$, and $\gamma$ is a mixing probability on $\Theta$. As an example, one could take
\begin{gather*}
Q_i(\cdot,\theta)=\mathcal{N}(\mu_i,\sigma^2_i)\quad\text{and}\quad\theta=(\mu_1,\ldots,\mu_p,\sigma^2_1,\ldots,\sigma^2_p).
\end{gather*}
In this case, $\gamma$ would be a probability measure on $\Theta=\mathbb{R}^p\times (0,\infty)^p$.

More generally, fix a $\sigma$-finite measure $\nu_i$ on $\mathcal{B}_1$ and suppose $Q_i(\cdot,\theta)$ has a density $f_i(\cdot,\theta)$ with respect to $\nu_i$, namely
\begin{gather*}
Q_i(A,\theta)=\int_Af_i(t,\theta)\,\nu_i(dt)\quad\quad\text{ for all }i\in I,\,A\in\mathcal{B}_1\text{ and }\theta\in\Theta.
\end{gather*}
Define $\lambda$ to be the probability measure on $\mathcal{B}_{2p}$ with density $q$ with respect to $\nu\times\nu$, where $\nu=\nu_1\times\ldots\times\nu_p$ and
\begin{gather*}
q(y)=q(y_1,\ldots,y_{2p})=\int_\Theta\,\prod_{i=1}^pf_i(y_i,\theta)\,\prod_{i=1}^pf_i(y_{p+i},\theta)\,\gamma(d\theta)\quad\quad\text{for all }y\in\mathbb{R}^{2p}.
\end{gather*}
Then, $\lambda\in\Lambda$ because of Theorem \ref{m1m}. Therefore, after observing $X=x$, a value $\widetilde{x}$ for the knockoff $\widetilde{X}$ can be drawn from the conditional density
\begin{gather*}
\frac{q(x,\widetilde{x})}{h(x)},
\end{gather*}
where $x,\,\widetilde{x}\in\mathbb{R}^p$ and $h(x)=\int_\Theta\,\prod_{i=1}^pf_i(x_i,\theta)\,\gamma(d\theta)$ is the marginal density of $X$.
\end{ex}

\vspace{0.2cm}

Example \ref{m2mx} is general enough to cover a wide range of real situations.

We now briefly discuss the choice of $\gamma$. It may be helpful to recall that, once $Q_1(\cdot,\theta),\ldots,Q_p(\cdot,\theta)$ have been selected, to choose $\gamma$ is equivalent to choose the probability distribution of $X$.

\vspace{0.2cm}

\begin{ex} {\bf (Choice of $\gamma$).}
It is tempting to regard the mixing measure $\gamma$ as a prior distribution. Even if not mandatory, this interpretation is helpful. Hence, in the sequel, $\gamma$ is referred to as {\em the prior}. Let $Q_i(\cdot,\theta)$, $f_i(\cdot,\theta)$ and $\lambda\in\Lambda$ be as in Example \ref{m2mx}. Two (distinct) criterions to select $\gamma$ are as follows.

Roughly speaking, $\gamma$ tunes the dependence between $X$ and $\widetilde{X}$, where $\widetilde{X}$ is such that $\mathcal{L}(X,\widetilde{X})=\lambda$. Define in fact
\begin{gather*}
Q(\cdot,\theta)=Q_1(\cdot,\theta)\times\ldots\times Q_p(\cdot,\theta).
\end{gather*}
Then, $Q(\cdot,\theta)$ is a probability measure on $\mathcal{B}_p$ and
\begin{gather}\label{w3z09}
P(X\in A,\,\widetilde{X}\in B)-P(X\in A)\,P(\widetilde{X}\in B)=
\\=\int_\Theta\,Q(A,\theta)\,Q(B,\theta)\,\gamma(d\theta)-\int_\Theta\,Q(A,\theta)\,\gamma(d\theta)\,\int_\Theta\,Q(B,\theta)\,\gamma(d\theta)\notag
\end{gather}
for all $A,\,B\in\mathcal{B}_p$. Thus, a first criterion is to choose $\gamma$ so as to make \eqref{w3z09} small for some $A$ and $B$. This is just a rough and naive indication, difficult to realize in practice, but it may be potentially useful.

To state the second criterion, denote by $h_\gamma$ the marginal density of $X$ when the prior is $\gamma$, namely
\begin{gather*}
h_\gamma(x)=\int_\Theta\,\prod_{i=1}^pf_i(x_i,\theta)\,\gamma(d\theta)\quad\quad\text{for all }x\in\mathbb{R}^p.
\end{gather*}
Suppose now that $X=x$ is observed. Then, $h_\gamma$ can be seen as the integrated likelihood of $x$ with respect to the prior $\gamma$. From a Bayesian point of view, it is desirable that $h_\gamma(x)$ is high. Therefore, a second criterion is to choose $\gamma$ so as to maximize the map $\gamma\mapsto h_\gamma(x)$. For instance, the choice between two conflicting priors $\gamma_1$ and $\gamma_2$ could be seen as a model selection problem. Accordingly, we could choose between $\gamma_1$ and $\gamma_2$ based on the {\em Bayes factor} $h_{\gamma_1}(x)/h_{\gamma_2}(x)$. A practical advantage is that we can profit on the broad literature on Bayes factors and related topics.
\end{ex}

\vspace{0.2cm}

Another useful feature of Example \ref{m2mx} is highlighted in the next example.

\vspace{0.2cm}

\begin{ex}\label{s6st8} {\bf (Uncorrelated knockoffs).}
Under some assumptions on $Q_i(\cdot,\theta)$, one obtains
\begin{gather*}
\text{cov}(X_i,\widetilde{X}_i)=0\quad\quad\text{for all }i\in I\text{ and all priors }\gamma.
\end{gather*}
Fix in fact $i\in I$ and suppose the mean of $Q_i(\cdot,\theta)$ exists and does not depend on $\theta$, say
\begin{gather*}
\int_\mathbb{R}t\,Q_i(dt,\theta)=a_i\quad\quad\text{for some }a_i\in\mathbb{R}\text{ and all }\theta\in\Theta.
\end{gather*}
Then, independently of $\gamma$, Fubini's theorem yields
\begin{gather*}
\text{cov}(X_i,\widetilde{X}_i)=\int_\Theta a_i^2\,d\gamma-\left(\int_\Theta a_i\,d\gamma\right)^2=a_i^2-a_i^2=0.
\end{gather*}
For instance, cov$(X_i,\widetilde{X}_i)=0$ provided $Q_i(\cdot,\theta)=\mathcal{N}(0,\sigma^2_i(\theta))$ for all $\theta$.
\end{ex}

\vspace{0.2cm}

We conclude our discussion of Example \ref{m2mx} with a practical example.

\vspace{0.2cm}

\begin{ex}\label{m3mxuv} {\bf (Conditionally independent Poisson data).} Let $\theta=(\theta_1,\ldots,\theta_p)$ and $Q_i(\cdot,\theta)$ a Poisson distribution with parameter $\theta_i$. We consider two different choices of the prior $\gamma$.

\vspace{0.2cm}

\noindent First, let $\gamma=\gamma_1\times\ldots\times\gamma_p$ where each $\gamma_i$ is a Gamma distribution with parameters $a_i$ and $b_i$. In this case, since $\theta_1,\ldots,\theta_p$ are independent under $\gamma$, the calculations are straightforward:

\begin{gather*}
q(y_1,\ldots,y_{2p})=\prod_{i=1}^p \int_0^{\infty}\left( \frac{\theta_i^{y_i}}{y_i!}e^{-\theta_i} \frac{\theta_i^{y_{i+p}}}{y_{i+p}!}e^{-\theta_i}\right)\,\frac{b_i^{a_i}}{\Gamma (a_i)}\theta_i^{a_i-1} e^{-b_i\theta_i}d\theta_i
\\=\prod_{i=1}^{p}\frac{1}{y_i!y_{i+p}!}\frac{b_i^{a_i}}{\Gamma (a_i)}\frac{\Gamma(a_i+y_i+y_{i+p})}{(b_i+2)^{a_i+y_i+y_{i+p}}}.
\end{gather*}

\noindent Similarly,

$$h(y_1,\ldots,y_p)=\prod_{i=1}^{p}\frac{1}{y_i!}\frac{b_i^{a_i}}{\Gamma (a_i)}\frac{\Gamma(a_i+y_i)}{(b_i+1)^{a_i+y_i}}.$$

\noindent Therefore, after observing $X=x$, a value $\widetilde{x}$ for the knockoff $\widetilde{X}$ can be drawn from the conditional density

$$\frac{q(x,\tilde{x})}{h(x)}=\prod_{i=1}^{p}\frac{1}{\tilde{x}_i!}\frac{\Gamma(a_i+x_i+\tilde{x}_i)}{\Gamma(a_i+x_i)}\frac{(b_i+1)^{a_i+x_i}}{(b_i+2)^{a_i+x_i+\tilde{x}_i}}.$$

\noindent Second, let $\gamma$ be a Dirichlet distribution with parameters $a_1,\ldots,a_p$. Denote by
$$S=\left\{\theta\in\mathbb{R}^p:\theta_i\ge 0\text{ for all }i\text{ and }\sum_{i=1}^p\theta_i=1\right\}$$
the $p$-dimensional simplex, and by
$$m(n_1,\ldots,n_p)=\int_S\,\theta_1^{n_1}\ldots\theta_p^{n_p}\,\gamma(d\theta)$$
the mixed moment of $\gamma$ of order $(n_1,\ldots,n_p)$. Explicit formulae for $m(n_1,\ldots,n_p)$ are available; see e.g. \cite{KBJ2000}, page 488, equation (49.7). Since $\gamma(S)=1$, one obtains

\begin{gather*}
q(y_1,\ldots,y_{2p})=\int_S\,\exp\left(-2\sum_{i=1}^p\theta_i\right)\,\prod_{i=1}^p\theta_i^{y_i+y_{i+p}}\,\prod_{i=1}^p\frac{1}{y_i!y_{i+p}!}\,\gamma(d\theta)\\
=e^{-2}\,m(y_1+y_{p+1},\ldots,y_p+y_{2p})\,\prod_{i=1}^p\frac{1}{y_i!y_{i+p}!}.
\end{gather*}

\noindent Similarly,

$$h(y_1,\ldots,y_p)=e^{-1}\,m(y_1,\ldots,y_p)\,\prod_{i=1}^p\frac{1}{y_i!}.$$

\noindent Hence, the conditional density of $\widetilde{X}$ given $X=x$ can be written as

$$\frac{q(x,\tilde{x})}{h(x)}=e^{-1}\,\,\frac{m(x_1+\tilde{x}_1,\ldots,x_p+\tilde{x}_p)}{m(x_1,\ldots,x_p)}\,\prod_{i=1}^{p}\frac{1}{\tilde{x}_i!}.$$

\end{ex}

\section{Sampling strategies}\label{e5ttf67n}


In Sections \ref{c5g8n} and \ref{2wx8m}, exploiting copulas and conditional independence, two general methods for constructing knockoffs have been introduced. In this section, having applications in mind, such methods are translated into practical algorithms. Two classical MCMC algorithms, the Metropolis-Hastings sampler and the
Gibbs sampler via data augmentation, are proposed. Obviously, our proposals are not the only possible ones. The literature on MCMC is huge (see e.g. \cite{BGJM11}) and some better sampling strategies could be available. The only goal of this section is to point out that the material of Sections \ref{c5g8n}-\ref{2wx8m} can be easily used in applied settings.

\medskip

We denote by $x=(x_1,\ldots,x_p)$ and $\widetilde{x}=(\widetilde{x}_1,\ldots,\widetilde{x}_p)$ two points of $\mathbb{R}^p$. Here, $x$ should be regarded as the observed value of $X$ and $\widetilde{x}$ as the value to be sampled of the knockoff $\widetilde{X}$.

\subsection{A Metropolis-Hastings approach to copula knockoffs}

In the notation of Section \ref{c5g8n}, we assume that $C,D_1,\ldots,D_p,F_1,\ldots,F_p$ are all absolutely continuous with respect to the Lebesgue measure of appropriate dimension. Algorithm 1 provides a strategy to sample $\widetilde{x}$ via the copula construction of Section \ref{c5g8n}.

\begin{algorithm}
\caption{Copula knockoffs: general algorithm}\label{alg:cap1}
\begin{algorithmic}
\State 1.  Choose the distribution functions $F_1,\ldots,F_p$ on $\mathbb{R}$, a $p$-copula C and a family of 2-copulas $D_1,\dots,D_p$ in such a way that
\begin{gather*}
H(x,\widetilde{x})=C\Bigl[D_1\bigl(F_1(x_1),F_1(\widetilde{x}_1)\bigr),\ldots,D_p\bigl(F_p(x_p),F_p(\widetilde{x}_p)\bigr)\Bigr]
\end{gather*}
is a distribution function on $\mathbb{R}^{2p}$
\State 2.  Sample $\widetilde{x}$ from the conditional density
\begin{gather*}
p(\widetilde{x}\mid x)=\frac{1}{\varphi\bigl[F_1(x_1),\ldots,F_p(x_p)\bigr]\,\prod_{i=1}^pf_i(x_i)}\,\cdot\,\frac{\partial^{2p} H}{\partial x_p\ldots\partial x_1\partial \widetilde{x}_p\ldots\partial \widetilde{x}_1}(x,\widetilde{x})
\end{gather*}
\end{algorithmic}
where $\varphi$ and $f_i$ are the densities of $C$ and $F_i$, respectively
\end{algorithm}

Sampling from $p(\widetilde{x}\mid x)$ may be not straightforward. However, since
$$p(\widetilde{x}\mid x)\propto \frac{\partial^{2p} H}{\partial x_p\ldots\partial x_1\partial \widetilde{x}_p\ldots\partial \widetilde{x}_1}(x,\widetilde{x}),$$
a Metropolis-Hastings sampler is available. One such sampler is provided by Algorithm \ref{alg:cap2}.
\begin{algorithm}
\caption{Metropolis-Hastings sampler}\label{alg:cap2}
\begin{algorithmic}
\State 1. Choose the initial value $\widetilde{x}^{(0)}$

\State 2. Choose the proposal distribution $K(\cdot|x)$ (usually a Markov Kernel)

\For{$j\gets 1$ to $M$}

\State 3. Sample $y$ from the proposal $K(\cdot|\widetilde{x}^{(j-1)})$

\State 4. Compute the acceptance probability

$$\alpha(\widetilde{x}^{(j-1)},y)=\min\left\lbrace 1,\frac{p(y\mid x)}{p(\widetilde{x}^{(j-1)}\mid x)}\frac{K(\widetilde{x}^{(j-1)}|y)}{K(y|\widetilde{x}^{(j-1)})}\right\rbrace=\min\left\lbrace 1,\frac{\frac{\partial^{2p} H}{\partial x_p\ldots\partial x_1\partial \widetilde{x}_p\ldots\partial \widetilde{x}_1}(x,y)}{\frac{\partial^{2p} H}{\partial x_p\ldots\partial x_1\partial \widetilde{x}_p\ldots\partial \widetilde{x}_1}(x,\widetilde{x}^{(j-1)})}\frac{K(\widetilde{x}^{(j-1)}|y)}{K(y|\widetilde{x}^{(j-1)})}\right\rbrace$$

\State 5. Set $\widetilde{x}^{(j)}=y$ with probability $\alpha(\widetilde{x}^{(j-1)},y)$  and $\widetilde{x}^{(j)}=\widetilde{x}^{(j-1)}$ with probability $1-\alpha(\widetilde{x}^{(j-1)},y)$

 \EndFor

       \State 6. Return the sample $\widetilde{x}^{(j)}$,  $j=1,\dots,M$

\end{algorithmic}
\end{algorithm}
\subsection{A data augmentation approach to conditional independence knockoffs}

We now outline how to sample knockoffs via the conditional independence  strategy proposed in Example \ref{m2mx}.  The main steps of the procedure are summarized by Algorithm \ref{alg:cap3}.

\medskip

\begin{algorithm}
\caption{Conditional independence knockoffs: general algorithm}\label{alg:cap3}
\begin{algorithmic}
\State 1.  Choose a density $f_i(\cdot,\theta)$, with respect to some reference measure $\nu_i$, for $X_i$,  $i=1,\dots,p$

\State 2.  Choose a mixing probability $\gamma(d\theta)$

\State 3. Compute
$$q(x,\widetilde{x})=\int_\Theta\,\prod_{i=1}^pf_i(x_i,\theta)\,\prod_{i=1}^pf_i(\widetilde{x}_i,\theta)\,\gamma(d\theta)\quad\text{ and }\quad h(x)=\int_\Theta\,\prod_{i=1}^pf_i(x_i,\theta)\,\gamma(d\theta)$$

\State 4.  Sample $\widetilde{x}$ from the conditional density $q(x,\widetilde{x})/h(x)$
\end{algorithmic}
\end{algorithm}

Step 4 of Algorithm \ref{alg:cap3} could be difficult since the numerator and denominator of the conditional density $q(x,\widetilde{x})/h(x)$ are often not in closed form. Hence, $q(x,\widetilde{x})/h(x)$ may be not explicit and computational methods come to the fore. The Metropolis-Hastings could be problematic since we have to evaluate integrals in the acceptance rate. This can be time consuming. An alternative approach is a Data Augmentation strategy where both $\widetilde{x}$ and $\theta$ are sampled at the same time.

Suppose $\Theta$ is an open subset of $\mathbb{R}^k$ for some $k$, and $\gamma$ has a density with respect to Lebesgue measure on $\Theta$, say $\gamma(d\theta)=p(\theta)\,d\theta$. Then,
$$\frac{q(x,\widetilde{x})}{h(x)}=\int_\Theta \frac{\bar{q}(x,\widetilde{x},\theta)}{h(x)}\,d\theta\quad\text{where}\quad\bar{q}(x,\widetilde{x},\theta)=p(\theta)\,\prod_{i=1}^pf_i(x_i,\theta)\,\prod_{i=1}^pf_i(\widetilde{x}_i,\theta).$$
This is quite convenient since it makes easier to implement a Gibbs sampler on the augmented space with $\theta$.  The full conditional distributions are straightforward by noting that
$$\frac{\bar{q}(x,\widetilde{x},\theta)}{h(x)}\propto p(\theta)\,\prod_{i=1}^pf_i(x_i,\theta)\,\prod_{i=1}^pf_i(\widetilde{x}_i,\theta).$$
Algorithm \ref{alg:cap4} provides a Gibbs sampler for $\bar{q}(x,\widetilde{x},\theta)$.

\begin{algorithm}
\caption{Data Augmentation sampler}\label{alg:cap4}
\begin{algorithmic}
\State 1. Choose the initial value $\theta^{(0)}$

\For{$j\gets 1$ to $M$}

\For{$i\gets 1$ to $p$}

        \State 2. Sample $\widetilde{x}^{(j)}_i|\,\theta^{(j-1)}\sim f_i(\cdot,\theta^{(j-1)})$

       \EndFor

\State 3. Sample $\theta^{(j)}$ from the posterior of $\theta$ given $x,\widetilde{x}^{(j)}$

 \EndFor

       \State 4. Return the sample $(\widetilde{x}^{(j)},\theta^{(j)})$,  $j=1,\dots,M$

\end{algorithmic}
\end{algorithm}

It should be noted that Algorithms \ref{alg:cap2} and \ref{alg:cap4} provide a sample $\widetilde{x}^{(1)},\ldots,\widetilde{x}^{(M)}$ of knockoffs rather than a single realization.  This could be helpful when taking into account the uncertainty intrinsic in the simulation procedure. Note also that, at each step $j$, Algorithm \ref{alg:cap4} requires to sample from the posterior of $\theta$ given $x,\widetilde{x}^{(j)}$. In some cases, this could not be an easy step. However, it is straightforward in several scenarios, such as conjugate models.

\medskip

\newpage

\begin{center}{\bf Appendix}\end{center}

\medskip

\begin{proof}[Proof of Theorem \ref{r5y7bq}]
First note that, since $\mathcal{F}$ is a group under composition,
\begin{gather*}
\sum_{f\in\mathcal{F}}\,g\circ f^{-1}=\sum_{f\in\mathcal{F}}\,g\circ f\quad\quad\text{for any real function }g\text{ on }\mathbb{R}^{2p}.
\end{gather*}

{\bf ``(a) $\Rightarrow$ (b)".} Since $\mathcal{F}$ contains the identity map, condition (a) implies $\pi\le 2^p\lambda$. Hence, $\pi$ has a density $q$ with respect to $\lambda$. Since $\lambda\in\mathcal{P}$, condition (a) also implies
\begin{gather*}
\int_A2^p\,d\lambda=2^p\,\lambda(A)=\sum_{f\in\mathcal{F}}\,\pi\circ f^{-1}(A)=\sum_{f\in\mathcal{F}}\,\int_{f^{-1}(A)}q\,d\lambda
\\=\sum_{f\in\mathcal{F}}\,\int_Aq\circ f^{-1}\,d\lambda
=\int_A\,\Bigl(\sum_{f\in\mathcal{F}}\,q\circ f\Bigr)\,d\lambda\quad\quad\text{for each }A\in\mathcal{B}_{2p}.
\end{gather*}

{\bf ``(b) $\Rightarrow$ (c)".} If $A\in\mathcal{G}$, then $A=f^{-1}(A)$ for all $f\in\mathcal{F}$, so that
\begin{gather*}
\pi(A)=\pi\bigl(f^{-1}(A)\bigr)=\int_{f^{-1}(A)}q\,d\lambda=\int_Aq\circ f^{-1}\,d\lambda\quad\quad\text{for all }f\in\mathcal{F}.
\end{gather*}
Hence, condition (b) implies
\begin{gather*}
2^p\,\pi(A)=\sum_{f\in\mathcal{F}}\,\int_Aq\circ f^{-1}\,d\lambda=\int_A\,\Bigl(\sum_{f\in\mathcal{F}}\,q\circ f\Bigr)\,d\lambda=2^p\,\lambda(A).
\end{gather*}

{\bf ``(c) $\Rightarrow$ (a)".} For each $x\in\mathbb{R}^{2p}$, define
\begin{gather*}
\mu_x=\frac{\sum_{f\in\mathcal{F}}\,\delta_{f(x)}}{2^p}
\end{gather*}
where $\delta_{f(x)}$ denotes the unit mass at the point $f(x)$. Then,
\begin{gather*}
\lambda(A)=\frac{\sum_{f\in\mathcal{F}}\,\lambda\circ f^{-1}(A)}{2^p}=\int\mu_x(A)\,\lambda(dx)
\\=\int\mu_x(A)\,\pi(dx)=\frac{\sum_{f\in\mathcal{F}}\,\pi\circ f^{-1}(A)}{2^p}\quad\quad\text{for all }A\in\mathcal{B}_{2p}
\end{gather*}
where the first equality follows from $\lambda\in\mathcal{P}$ and the third is because $\pi=\lambda$ on $\mathcal{G}$ and the map $x\mapsto\mu_x(A)$ is $\mathcal{G}$-measurable.
\end{proof}

\medskip

\begin{proof} [Proof of Theorem \ref{zd4g8}] We first recall a known fact. Let $\Phi:[0,1]^n\rightarrow [0,1]$ be a function such that $\Phi(u)=0$, if $u_i=0$ for some $i$, and $\Phi(u)=u_i$ if $u_j=1$ for all $j\ne i$. Then, $\Phi$ is an $n$-copula and $\lambda_\Phi\ll m_n$ provided $\frac{\partial^n\Phi}{\partial u_n\ldots\partial u_1}\ge 0$ on $[0,1]^n$.

After noting this fact, define
\begin{gather*}
C^*(u)=C\Bigl[D_1(u_1,u_{p+1}),\ldots,D_p(u_p,u_{2p})\Bigr]
\\=\int_0^{D_1(u_1,u_{p+1})}\ldots\int_0^{D_p(u_p,u_{2p})}\varphi(t_1,\ldots,t_p)\,dt_1\ldots dt_p\quad\quad\text{for each }u\in [0,1]^{2p}.
\end{gather*}

Let $n=2p$ and $\Phi=C^*$. By the result mentioned above, $C^*$ is a $2p$-copula and $\lambda_{C^*}\ll m_{2p}$ provided
\begin{gather}\label{x4r66yhb}
\frac{\partial^{2p}C^*}{\partial u_{2p}\ldots\partial u_1}\ge 0\quad\quad\text{everywhere on }[0,1]^{2p}.
\end{gather}
In this case, since $C^*$ is a copula, $H$ is a distribution function. Since $\lambda_{C^*}\ll m_{2p}$, one also obtains $\lambda_H\ll m_{2p}$ whenever $\mathcal{L}(X_i)\ll m_1$ for each $i\in I$. Therefore, it suffices to prove condition \eqref{x4r66yhb}. In turn, \eqref{x4r66yhb} follows from condition (jjj) after noting that
\begin{gather*}
\frac{\partial^{2p}C^*}{\partial u_{2p}\ldots\partial u_1}=\frac{\partial^p}{\partial u_{2p}\ldots\partial u_{p+1}}\,\,\frac{\partial^p C^*}{\partial u_p\ldots\partial u_1}
\\=\frac{\partial^p}{\partial u_{2p}\ldots\partial u_{p+1}}\,\,\varphi\Bigl[D_1(u_1,u_{p+1}),\ldots,D_p(u_p,u_{2p})\Bigr]\,\,\prod_{i=1}^p\frac{\partial}{\partial u_i}D_i(u_i,u_{p+i}).
\end{gather*}
\end{proof}

\medskip

\textbf{Acknowledgments:} This paper has been improved by the useful remarks of the AE and an anonymous referee.

\end{document}